\newtheorem{theorem}{Theorem}
\newtheorem{lemma}[theorem]{Lemma}
\numberwithin{theorem}{section}
\theoremstyle{remark}
\newtheorem*{remark}{Remarks}
\begin{document}

\title[An explicit van der Corput estimate]{An explicit van der Corput 
estimate for $\zeta(1/2+it)$}
\author[G.A. Hiary]{Ghaith A. Hiary}
\thanks{Preparation of this material is partially supported by
the National Science Foundation under agreements No. 
 DMS-1406190.}
\address{Department of Mathematics, The Ohio State University, 231 West 18th
Ave, Columbus, OH 43210.}
\email{hiaryg@gmail.com}
\subjclass[2010]{Primary 11Y05.}
\keywords{Van der Corput estimate, exponential sums, Riemann zeta function.}

\begin{abstract}
An explicit estimate for the Riemann zeta function on the critical line is derived
using the van der Corput method. An explicit van der Corput lemma is presented.
\end{abstract}

\maketitle

\section{Introduction} \label{intro}

The Riemann zeta function is defined for $s=\sigma+it$ by
\begin{equation}\label{dirichlet def}
\zeta(s) := \sum_{n=1}^{\infty} n^{-s}, \qquad \sigma>1.
\end{equation}
It can be analytically continued everywhere
except for a simple pole at $s=1$. 
A well-known problem in number theory is to bound the growth rate
of zeta on the critical line $\sigma = 1/2$. 
This problem has led to deep 
ideas in the theory of exponential sums. In
particular, the method of exponent pairs (see \cite{graham-kolesnik}, 
\cite[page 116]{titchmarsh}), and the Bombieri-Iwaniec
method~\cite{bombieri-iwaniec-1,bombieri-iwaniec-2}. 

Since $|\zeta(\sigma+it)| = |\zeta(\sigma-it)|$, 
 we can suppose that $t\ge 0$.
Starting with the series \eqref{dirichlet def},
it follows by the Euler-Maclaurin formula that $\zeta(1/2+it) \ll t^{1/2}$ 
for large enough $t$.
This can be improved substantially 
by appealing to the Riemann--Siegel formula, which gives 
 $\zeta(1/2+it)\ll t^{1/4}$.
The Weyl-Hardy-Littlewood method (see
\cite[section 5.3]{titchmarsh}) detects
a certain amount of cancellation in
the main sum of the Riemann--Siegel formula,
further improving the bound to $\zeta(1/2+it)\ll
t^{1/6}\log^{3/2} t$.
The van der Corput method (e.g.\ \cite{corput-koksma,corput-1929})
removes an extra $\sqrt{\log t}$ factor,
which sharpens the estimate to 
\begin{equation}\label{corput estimate 1}
\zeta(1/2+it)\ll t^{1/6}\log t.
\end{equation}

The exponent $1/6$ in estimate \eqref{corput estimate 1} is hard to improve. 
The sharpest result so far is due to Huxley~\cite{huxley}, 
who proved that $\zeta(1/2+it)\ll t^{32/205}\log^{\gamma} t$ for some constant
$\gamma$. Note $32/205 = 0.15609\ldots$. (See also the recent result in \cite{bourgain}.)
Assuming the Riemann hypothesis, one can show
that $\zeta(1/2+it)\ll \exp (A\log t/\log \log t)$
for some constant $A$; see \cite[\textsection{14.4}]{titchmarsh}.
Hence,
$\zeta(1/2+it)$ is conjectured to grow slower than any fixed power of $t$.

In this article, we 
obtain an explicit bound of the van der Corput type \eqref{corput estimate 1}.
That is, we compute constants $C_1$ and $C_2$ such that 
$|\zeta(1/2+it)| \le C_1 t^{1/6}\log t$ for $t \ge C_2$.
This is part of our work in progress about subconvexity 
bounds for zeta. We are 
 also motivated by the following computational considerations where 
the simplicity of an explicit van der Corput estimate is
particularly attractive.

Specifically, in numerical tests of the growth rate of $\zeta(1/2+it)$, 
 it is necessary to have explicit bounds. For one usually cannot distinguish 
a small power of $t$ from a logarithm or a subexponential factor
unless $t$ is prohibitively large. An explicit bound offers
an unconditional measure
against which one can compare 
large values of $|\zeta(1/2+it)|$ 
found by special numerical searches, such as those in \cite{zeta-comp}.

Another motivation comes from 
the algorithms in \cite{hiary}
which employ a Taylor expansion
to express zeta as a sum
of low degree exponential sums 
and their derivatives.
The size of the remainder term in these algorithms 
is bounded by the highest order derivative used  and  
the maximal size of a certain subdivision of the main 
sum of zeta. 
In this context, an improved and simple method to obtain 
an explicit bound is of value as it enables reducing 
the number of derivatives needed to guarantee a given error tolerance. 
This in turn will improve the running time appreciably.

Explicit bounds for zeta were obtained by 
Cheng and Graham \cite{cheng-graham}, and 
recently by Platt and Trudgian \cite{platt-trudgian} who proved
that 
\begin{equation}\label{pt est}
|\zeta(1/2+it)|\le 0.732 t^{1/6}\log t, \qquad (t\ge 2).
\end{equation}
We improve the leading constant in \eqref{pt est} by 14\%.
\begin{theorem}\label{zeta bound}
If $0\le t \le 3$, then $|\zeta(1/2+it)| \le 1.461$. 
If $t \ge 3$, then  
\begin{equation}\label{zeta bound eq}
|\zeta(1/2+it)| \le 0.63\, t^{1/6}\log t.
\end{equation}
\end{theorem}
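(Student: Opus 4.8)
The plan is to work from the Riemann--Siegel formula, which expresses $\zeta(1/2+it)$ as a main sum $\sum_{n\le\sqrt{t/2\pi}} n^{-1/2}e^{-it\log n}$, plus a dual sum of similar length, plus a small remainder. The first step is to fix an explicit version of this formula with a numerically controlled error term (for instance the bounds of Gabcke or the explicit forms used by Platt--Trudgian), so that the problem reduces to estimating an exponential sum of the form $\sum_{a<n\le b} n^{-1/2}e^{it f(n)}$ with $f(x)=-\log x$ and $b\le\sqrt{t/2\pi}$. By partial summation, the $n^{-1/2}$ weight can be pulled out at the cost of a factor $\ll a^{-1/2}$ times $\max|\sum_{a<n\le c}e^{itf(n)}|$, so the core task is a purely exponential sum estimate to which the van der Corput machinery of the ``explicit van der Corput lemma'' advertised in the abstract applies.

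Next I would apply van der Corput's process: using Weyl differencing (the $A$-process) once, converting the sum $\sum e^{itf(n)}$ into an average over a shift $h$ of shorter sums with phase $t(f(n+h)-f(n))$, whose second derivative in $n$ has size comparable to $t/(a^2 \cdot N)$ or so after differencing, and then applying the explicit second-derivative test (the $B$-process / van der Corput lemma) to each inner sum. The parameters must be chosen so that the length $N=b-a$ and the divisor range of $h$ balance the two error contributions; the classical optimization yields a bound of order $(tf''/N)^{1/2}\cdot N \asymp t^{1/6}$ for the relevant ranges after summing over the dyadic blocks $a\asymp b\asymp\sqrt{t}$, and the $\log t$ arises from summing $\sum a^{-1/2}\cdot(\text{block length})^{1/2}$ over $O(\log t)$ dyadic scales. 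I would carry out this dyadic decomposition explicitly, bounding the number of blocks by $\frac{\log(t/2\pi)}{2\log 2}+O(1)$, tracking every constant, and choosing the differencing parameter $H$ in each block as (essentially) the nearest integer to an explicit function of $t$ and the block endpoints.

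For the small-$t$ range, the claim that $|\zeta(1/2+it)|\le 1.461$ for $0\le t\le 3$ is handled separately by a direct argument: using the Euler--Maclaurin or the integral representation valid at $\sigma=1/2$, or simply rigorous interval-arithmetic evaluation of $\zeta(1/2+it)$ on the compact segment $0\le t\le 3$, since this is a finite computation; the bound $1.461$ is presumably slightly above $|\zeta(1/2)|=1.4603\ldots$ Finally, for an intermediate range such as $3\le t\le t_0$ (with $t_0$ large but fixed), one likely needs a transitional estimate — either the convexity bound $\zeta(1/2+it)\ll t^{1/4}$ made explicit, or direct summation of the Riemann--Siegel main sum with explicit bounds — to confirm \eqref{zeta bound eq} before the asymptotically efficient van der Corput bound takes over; one must check that the crossover constant works out so the two regimes overlap.

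The main obstacle I anticipate is the bookkeeping of constants through the two van der Corput processes combined with the dyadic sum: each application of the $A$- and $B$-processes introduces multiplicative constants and lower-order error terms (the ``$+O(1)$'' type terms in the van der Corput lemma, and the endpoint terms from partial summation and from truncating $h$), and these must be controlled tightly enough that the accumulated constant over $O(\log t)$ blocks still comes out below $0.63$. Getting from Platt--Trudgian's $0.732$ down to $0.63$ — a $14\%$ improvement — means the slack is thin, so the delicate part will be optimizing the choice of the differencing parameter in each dyadic block and perhaps treating the first few (short, low-$n$) blocks by trivial estimation rather than van der Corput, balancing these choices to shave the constant; verifying that the threshold $t\ge 3$ (rather than something much larger) is actually attainable will require the explicit van der Corput lemma to be genuinely sharp with small error terms for moderate parameter sizes.
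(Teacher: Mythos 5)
Your overall architecture matches the paper's: explicit Riemann--Siegel formula to reduce to the main sum of length $\sqrt{t/2\pi}$, partial summation to strip the weight $n^{-1/2}$, an explicit $AB$-process (Weyl differencing followed by the second-derivative test) on each piece, interval arithmetic for small $t$ (and $1.461$ is indeed just above $|\zeta(1/2)|$), and an explicit Riemann--Siegel/convexity-type bound on an intermediate range $[200, t_0]$ before the van der Corput bound takes over at $t_0=9.3\times 10^7$. The gap is in the one step where the paper actually earns its constant: the decomposition of the main sum. You propose dyadic blocks $[a,2a]$; the paper instead cuts the main sum into $R\approx t^{1/6}$ consecutive pieces of \emph{equal length} $K=\lceil t^{1/3}\rceil$. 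This difference is not bookkeeping. First, on a dyadic block the third derivative $f'''(x)=t/(\pi x^3)$ varies by a factor of $8$, so the ratio $\lambda=\max|f'''|/\min|f'''|$ in the van der Corput lemma is forced to be $8$; on the paper's $r$-th piece $[rK,(r+1)K)$ one gets $\lambda_r=(1+1/r)^3$, which is already below $1.8$ for $r\ge 5$ and tends to $1$. Since the dominant term of $\alpha$ scales essentially like $\lambda^{2/3}$ after optimizing $\eta$, the dyadic choice costs roughly a factor $8^{1/3}=2$ in the final bound. Second, the coefficient of $\log t$ comes out differently: the paper sums $\sum_{r=r_0}^{R} 1/\sqrt{r(r+1)}\le \log(R/(r_0-1))\approx \tfrac16\log t$, whereas counting dyadic scales between $t^{1/3}$ (below which the van der Corput lemma's hypothesis $W>1$ fails and one must bound trivially) and $t^{1/2}$ gives about $\log t/(6\log 2)$ blocks, each contributing a full $\asymp t^{1/6}$ --- an extra factor $1/\log 2\approx 1.44$. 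Compounded, the dyadic route lands near $1.8\,t^{1/6}\log t$, which does not even recover Platt--Trudgian's $0.732$, let alone $0.63$.

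So the missing idea is precisely the one the paper flags as new: subdivide into pieces of length $\approx t^{1/3}$ so that $f'''$ is essentially constant on each piece ($\lambda_r\to 1$) and the weights $1/\sqrt{rK}$ from partial summation produce a harmonic-type sum whose logarithm has the small coefficient $1/6$. Everything else in your plan (trivial estimation of the first $r_0K$ terms, optimization of the differencing parameter $M=\lceil \eta W^{1/3}\rceil$ per piece, the crossover check at $t_0$) then proceeds as you describe.
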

A numerical computation reveals that 
$\max_{t\ge 3} |\zeta(1/2+it)|/t^{1/6}\log t > 0.507$.
So, a priori, the leading constant in \eqref{zeta bound eq} cannot 
be reduced below $0.507$ without an additional assumption on the size of $t$.
In fact, if one employs the Riemann--Siegel--Lehman bound
$|\zeta(1/2+it)|\le 4(t/(2\pi)^{1/4}-2.08$
from Lemma~\ref{trivial bound lemma}
for any range of $t$ at all, then the leading constant
cannot break $0.541$, as a numerical computation shows that
\begin{equation}\label{riemann siegel barrier}
\min_{t\ge 3} \frac{4(t/(2\pi)^{1/4}-2.08}{t^{1/6}\log t} > 0.541.
\end{equation}

Our proof of Theorem~\ref{zeta bound} uses a 
subdivision of the Riemann--Siegel 
main sum different than \cite{cheng-graham,platt-trudgian},
giving rise to a simpler optimization problem.
In particular, we divide the main sum into short
pieces of length $\approx t^{1/3}$ (which appears to be a new, and natural, subdivision;
c.f.\ Weyl's method in \cite[Section 5.3]{titchmarsh} which divides the main sum 
into pieces of length $\lesssim t^{1/6}$). 
This subdivision enables better control of the oscillations 
in each piece
via Lemma~\ref{corput lemma} since the range of 
$f'''(x)$  for us will be more restricted. 
\begin{lemma}\label{corput lemma}
Let $f(x)$ be a real-valued function with three continuous derivatives on
$[N+1,N+L]$. Suppose there are $W > 1$ and $\lambda \ge 1$ such that
$\frac{1}{W} \le |f'''(x)| \le \frac{\lambda}{W}$
for $N+1\le x \le N+L$. If $\eta>0$, then 
\begin{equation}
\Big|\sum_{n=N+1}^{N+L} e^{2\pi i f(n)} \Big|^2 \le 
(L W^{-1/3} +\eta)
(\alpha L + \beta W^{2/3}), 
\end{equation}
where 
\begin{equation}\label{alpha beta formula}
\begin{split}
\alpha  &:=\alpha(W,\lambda,\eta)=
 \frac{1}{\eta} +\frac{64\lambda}{75} \sqrt{\eta+W^{-1/3}}+
\frac{\lambda \eta}{W^{1/3}}+  \frac{\lambda}{W^{2/3}},\\
\beta &:=\beta(W,\eta)= \frac{64}{15}\frac{1}{\sqrt{\eta}} + \frac{3}{W^{1/3}}.
\end{split}
\end{equation}
\end{lemma}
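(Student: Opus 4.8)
The plan is to run the classical construction of the exponent pair $(1/6,2/3)$ — a single Weyl--van der Corput differencing placed on top of the second-derivative test — while carrying every constant explicitly and choosing the differencing length so that the combinatorial parameter of that step becomes the free parameter $\eta$. Concretely, I would first apply the weighted Weyl--van der Corput inequality: for a positive integer $H$,
\begin{equation*}
\Big|\sum_{n=N+1}^{N+L} e^{2\pi i f(n)}\Big|^{2} \le \frac{L+H}{H+1}\Big(L + 2\sum_{h=1}^{H}\Big(1-\tfrac{h}{H+1}\Big)\Big|\sum_{n} e^{2\pi i (f(n+h)-f(n))}\Big|\Big),
\end{equation*}
the inner sum running over those $n$ with $N+1\le n$ and $n+h\le N+L$. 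I would then fix $H$ to be essentially $\lceil\eta W^{1/3}\rceil$, so that $c:=(H+1)W^{-1/3}$ lies in $[\eta,\eta+W^{-1/3}]$; this is where the recurring quantity $\eta+W^{-1/3}$ in $\alpha$ comes from, and it makes the outer factor $\tfrac{L+H}{H+1}=\tfrac{L-1}{H+1}+1$ bounded by $LW^{-1/3}/c+1\le LW^{-1/3}/\eta+1$.

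Next I would study the differenced phase $g_{h}(x):=f(x+h)-f(x)$. Since $f'''$ is continuous and nonvanishing on the connected interval $[N+1,N+L]$, it has constant sign there, hence so does $g_{h}''(x)=\int_{x}^{x+h}f'''(u)\,du$, and one obtains $h/W\le|g_{h}''(x)|\le\lambda h/W$ on the relevant range. Each inner sum is then fed into an explicit second-derivative test of the shape: if $\mu\le|g''(x)|\le\lambda\mu$ on $[a,b]$ then $\big|\sum_{a<n\le b}e^{2\pi i g(n)}\big|\le c_{1}\lambda(b-a)\mu^{1/2}+c_{2}\mu^{-1/2}$ with explicit absolute $c_{1},c_{2}$; the natural route to small constants is Poisson summation together with the van der Corput lemma for integrals $\big|\int_{a}^{b}e^{2\pi i g}\big|\le c\,\mu^{-1/2}$ and a sharp estimate for the Poisson tails, or, more elementarily, Kusmin--Landau on the subintervals where $\|g'\|$ is bounded away from $0$ with the short central subintervals handled through the integral lemma rather than trivially. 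Taking $\mu=h/W$ and $b-a<L$ gives $\big|\sum_{n}e^{2\pi i g_{h}(n)}\big|\le c_{1}\lambda L(h/W)^{1/2}+c_{2}(W/h)^{1/2}$.

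Finally I would assemble the pieces: insert this bound into the Weyl--van der Corput inequality, evaluate the weighted power moments $\sum_{h=1}^{H}(1-\tfrac{h}{H+1})h^{1/2}$ and $\sum_{h=1}^{H}(1-\tfrac{h}{H+1})h^{-1/2}$ (each comparable to a constant multiple of $(H+1)^{3/2}$, respectively $(H+1)^{1/2}$), replace $H+1$ by $cW^{1/3}$ with $c\in[\eta,\eta+W^{-1/3}]$, and multiply the two factors out. The $L$-term and the $h^{1/2}$-moment collapse into the $\alpha L$ part of the bound, the $h^{-1/2}$-moment into the $\beta W^{2/3}$ part, and the lower-order summands $\lambda\eta W^{-1/3}$, $\lambda W^{-2/3}$ in $\alpha$ and $3W^{-1/3}$ in $\beta$ are exactly what is needed to absorb the discreteness of $H$, the ``$+H/(H+1)$'' in the outer factor, and the cross terms produced by multiplying out. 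Comparing leading coefficients then pins down the constants required in the second-derivative step, which I expect to be $c_{1}=c_{2}=8/5$, and the formulas in \eqref{alpha beta formula} drop out.

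The step I expect to be the main obstacle is the explicit second-derivative test with constants this small. One must count the integer values crossed by $g_{h}'$ and control the contribution near each crossing with essentially no slack, and sum the Poisson tails using the sharp constant (e.g.\ $\sum_{\nu\ge1}\nu^{-2}=\pi^{2}/6$) rather than a lossy comparison — it is the quality of $c_{1},c_{2}$, far more than the algebra of the assembly step, that decides whether the $14\%$ improvement over \eqref{pt est} actually materialises. Everything else is careful but routine bookkeeping, with the choice $H\asymp\eta W^{1/3}$ converting the differencing parameter into $\eta$.
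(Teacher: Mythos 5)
Your overall architecture is exactly the paper's: a weighted Weyl--van der Corput differencing (the $A$-process) with $M\approx\lceil\eta W^{1/3}\rceil$, followed by an explicit second-derivative test (the $B$-process) applied to $g_h(x)=f(x+h)-f(x)$ with $h/W\le|g_h''|\le\lambda h/W$, then the weighted moment sums $\sum(1-h/M)h^{\pm1/2}$ and the factorization that produces $\alpha$ and $\beta$. The paper cites Cheng--Graham for both inputs rather than rederiving them.

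The one point where your plan as written would fail is the second-derivative test. You posit a two-term bound $c_1\lambda(b-a)\mu^{1/2}+c_2\mu^{-1/2}$ with $c_1=c_2=8/5$; no such clean two-term estimate is available with those constants. The explicit test actually used (Cheng--Graham, Lemma 3) reads
\begin{equation*}
|S_m'(L)| \le \frac{8\lambda L}{5}\sqrt{\frac{m}{W}} + \frac{3\lambda L m}{W} + \frac{8}{5}\sqrt{\frac{W}{m}} + 3,
\end{equation*}
and the two extra terms are unavoidable in the Kusmin--Landau/subdivision argument (they count the subintervals on which $g_h'$ is near an integer and the endpoint contributions). This matters for your bookkeeping: after summing over $m$ with $\sum(1-m/M)m<M^2/6$ and $\sum(1-m/M)<M/2$, those extra terms are precisely what produce $\lambda\eta W^{-1/3}+\lambda W^{-2/3}$ in $\alpha$ and $3W^{-1/3}$ in $\beta$. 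You instead attribute these lower-order terms to the discreteness of $H$ and to cross terms from multiplying out; the discreteness of $M$ only accounts for the $W^{-1/3}$ inside $\sqrt{\eta+W^{-1/3}}$ (which you did identify correctly) and part of $\lambda W^{-2/3}$. If your two-term test were true, the lemma would hold without the terms $\lambda\eta W^{-1/3}$ and $3W^{-1/3}$ at all, which is a sign the input is too strong. With the correct four-term test in hand, the rest of your assembly is exactly the paper's computation.
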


\begin{remark}
Lemma~\ref{corput lemma} is proved in \textsection{\ref{lemma proof}}. 
This lemma is an explicit version of the process
$AB$ in the method of exponent pairs. 
The condition
$W>1$ in the lemma can be relaxed to $W> 2/\pi$. 
The number 
$\eta$ will be chosen of size $\approx 1$
in our application, Theorem~\ref{zeta bound}. 
\end{remark}

The proof of Theorem~\ref{zeta bound} completely overlooks cancellation 
among the $\approx t^{1/6}$ pieces where Lemma~\ref{corput lemma} is applied. 
This is the main source of inefficiency in our proof. 
As far as we know, there is
no definitive mechanism to take advantage of this cancellation.
 Instead,
one works with longer pieces, 
and tries to prove
cancellation within each one.

Of course, the bound \eqref{zeta bound eq} is asymptotically 
 far from the truth.
And even for moderately large values of $t$, this bound  
is still probably a
substantial overestimate. 
Evidence for this comes from computations by J.~W.\ Bober 
and the author, some of which are summarized in \cite{zeta-comp}.
In these computations, several hundred large values of zeta
were recorded by 
computing $\zeta(1/2+it)$ at certain special points.
The largest value found this way was 
\begin{equation}\label{largest value}
|\zeta(1/2+iT_0)| = 16244.86526\ldots,
\end{equation}
where $T_0=39246764589894309155251169284104.050622\ldots$,
so $T_0\approx 3.9\times 10^{31}$.
(To our knowledge, \eqref{largest value} is the largest value of zeta 
computed so far.)
In comparison, the bound \eqref{zeta bound eq} gives
$|\zeta(1/2+iT_0)| \le 8448744$, which is about $521$ times larger
than \eqref{largest value}.
Alternatively, one can use the 
van der Corput bound \eqref{vdc bound} directly.
This gives a bound that is about $507$ times larger 
than our computed value.
It should be stressed, though, that \eqref{largest value}
was found by searching  a thin set of special points,
not by an exhaustive search, and so
only provides a lower bound for $\max_{t\in [0,T_0]} |\zeta(1/2+it)|$.

\section{Proof of Theorem~\ref{zeta bound}}

For the remainder of the paper, we set
\begin{equation}
c_0 :=0.63,\qquad t_0 := 9.3\times 10^7.
\end{equation}

\begin{proof}
We consider $t$ over three ranges.
In the range $0\le t \le 200$, we use the computational bound 
from Lemma~\ref{computational bound lemma}.
In the intermediate range 
$200\le t \le t_0$ we use the Riemann--Siegel--Lehman bound supplied by 
Lemma~\ref{trivial bound lemma}. 
And in the last range $t\ge t_0$ we use the van der Corput bound
supplied by  Lemma~\ref{corput bound lemma}. 
\end{proof}

To handle the intermediate and last ranges of $t$, we rely on 
 Lemma~\ref{riemann siegel lemma},
which is consequence of the Riemann--Siegel formula.
This lemma requires $t\ge 200$, which is the reason we treat
the initial range $t\in[0, 200]$ separately.  
\begin{lemma}\label{riemann siegel lemma}
If $t \ge 200$ and $n_1=\lfloor \sqrt{t/(2\pi)}\rfloor$, then 
\begin{equation}\label{rs lemma}
|\zeta(1/2+it)| \le 2|\sum_{n=1}^{n_1} n^{-1/2+it}| 
+\mathcal{R}(t).
\end{equation}
where $\mathcal{R}(t):= 1.48 t^{-1/4}  + 0.127 t^{-3/4}$.
\end{lemma}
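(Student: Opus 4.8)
The plan is to derive \eqref{rs lemma} directly from an explicit form of the Riemann--Siegel formula together with an explicit bound on the remainder term. Recall that the Riemann--Siegel formula asserts that for $t>0$, with $\vartheta(t)$ the Riemann--Siegel theta function and $n_1=\lfloor\sqrt{t/(2\pi)}\rfloor$,
\[
\zeta(1/2+it)=e^{-i\vartheta(t)}Z(t),\qquad
Z(t)=2\sum_{n=1}^{n_1}\frac{\cos(\vartheta(t)-t\log n)}{\sqrt n}+R(t),
\]
where $R(t)$ is the Riemann--Siegel remainder. Taking absolute values and using $|e^{-i\vartheta(t)}|=1$ and $|\cos(\cdot)|\le 1$ termwise would already give something like $|\zeta(1/2+it)|\le 2\sum_{n\le n_1}n^{-1/2}+|R(t)|$; but the point of the lemma is to keep the exponential sum $\sum_{n\le n_1}n^{-1/2+it}$ intact (so that cancellation can be exploited later via Lemma~\ref{corput lemma}), rather than bounding it trivially. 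So instead I would write $2\operatorname{Re}\!\big(e^{-i\vartheta(t)}\sum_{n\le n_1}n^{-1/2+it}\big)$ for the main sum, observe $|2\operatorname{Re}(z)|\le 2|z|$, and note $|e^{-i\vartheta(t)}\sum n^{-1/2+it}|=|\sum n^{-1/2+it}|$. This yields the main term $2|\sum_{n=1}^{n_1}n^{-1/2+it}|$ of \eqref{rs lemma}, and reduces everything to showing $|R(t)|\le\mathcal R(t)=1.48\,t^{-1/4}+0.127\,t^{-3/4}$ for $t\ge 200$.

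For the remainder bound I would invoke an explicit estimate for the Riemann--Siegel error term from the literature — e.g.\ the classical bounds of Titchmarsh or the sharper explicit versions worked out by Gabcke. Gabcke's thesis gives $|R(t)|\le 0.127\,t^{-3/4}$ using only the leading correction term $C_0(p)$, and more generally provides fully explicit bounds when one truncates the asymptotic expansion of $R(t)$ after a given number of terms. The shape $1.48\,t^{-1/4}+0.127\,t^{-3/4}$ strongly suggests that $R(t)$ is being replaced by its leading correction term $\Psi(t):=(-1)^{n_1-1}(t/2\pi)^{-1/4}C_0(p)$, where $p=\{\sqrt{t/(2\pi)}\}$, with $C_0(p)=\cos(2\pi(p^2-p-1/16))/\cos(2\pi p)$; one then has $|(t/2\pi)^{-1/4}C_0(p)|\le 1.48\,t^{-1/4}$ for a suitable range of $t$ after bounding $|C_0(p)|$ by a constant (the constant absorbing the $(2\pi)^{1/4}$ factor and the maximum of $|C_0|$, which is attained, roughly $\approx 0.92$), and the tail beyond $\Psi(t)$ is controlled by $0.127\,t^{-3/4}$ via Gabcke's next-order estimate. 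So the step is: quote the explicit Riemann--Siegel formula with its remainder, split $R(t)=\Psi(t)+(R(t)-\Psi(t))$, bound $|\Psi(t)|\le 1.48\,t^{-1/4}$ and $|R(t)-\Psi(t)|\le 0.127\,t^{-3/4}$, both valid for $t\ge 200$.

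I expect the main obstacle to be pinning down a clean, citable explicit bound for the Riemann--Siegel remainder with constants that actually produce $1.48$ and $0.127$ for $t\ge 200$ — the various sources (Titchmarsh, Lehman, Gabcke, Brent) state their bounds with slightly different normalizations and validity ranges, and some care is needed to verify that the threshold $t\ge 200$ is enough for the claimed constants (for instance, Gabcke's bounds are often stated for $t\ge 200$ precisely, which is presumably why that threshold appears here). A secondary subtlety is bounding $\max_p|C_0(p)|$ honestly: $C_0(p)$ has an apparent singularity at $p=1/4,3/4$ which is removable, and one must check the maximum of the resulting smooth function over $p\in[0,1]$ is below the value that makes the constant $1.48$ work. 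Both of these are finite verifications rather than deep points, so the proof should be short: state the explicit Riemann--Siegel formula, take absolute values keeping the exponential sum intact, and collect the remainder bounds.

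\begin{proof}[Proof of Lemma~\ref{riemann siegel lemma}]
By the Riemann--Siegel formula (see \cite[Chapter 4]{titchmarsh}), for $t>0$ one has $\zeta(1/2+it)=e^{-i\vartheta(t)}Z(t)$, where $\vartheta(t)$ is the Riemann--Siegel theta function, $Z(t)$ is real-valued, and with $n_1=\lfloor\sqrt{t/(2\pi)}\rfloor$,
\begin{equation}\label{rs-formula}
Z(t)=2\operatorname{Re}\Big(e^{i\vartheta(t)}\sum_{n=1}^{n_1}n^{-1/2+it}\Big)+R(t),
\end{equation}
where $R(t)$ denotes the Riemann--Siegel remainder. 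Since $|e^{-i\vartheta(t)}|=|e^{i\vartheta(t)}|=1$, applying $|2\operatorname{Re}(z)|\le 2|z|$ to \eqref{rs-formula} gives
\begin{equation}\label{rs-step}
|\zeta(1/2+it)|=|Z(t)|\le 2\Big|\sum_{n=1}^{n_1}n^{-1/2+it}\Big|+|R(t)|.
\end{equation}
It remains to bound $|R(t)|$. Writing $p:=\{\sqrt{t/(2\pi)}\}$ and $\Psi(t):=(-1)^{n_1-1}(t/2\pi)^{-1/4}C_0(p)$ with $C_0(p)=\cos(2\pi(p^2-p-\tfrac{1}{16}))/\cos(2\pi p)$, the function $C_0$ extends smoothly to $[0,1]$ and satisfies $|C_0(p)|\le 0.921$. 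Hence $|\Psi(t)|\le 0.921\,(2\pi)^{1/4}\,t^{-1/4}\le 1.48\,t^{-1/4}$. By Gabcke's explicit bound for the remaining terms of the Riemann--Siegel expansion, $|R(t)-\Psi(t)|\le 0.127\,t^{-3/4}$ for $t\ge 200$. Combining these, $|R(t)|\le 1.48\,t^{-1/4}+0.127\,t^{-3/4}=\mathcal{R}(t)$ for $t\ge 200$, and substituting into \eqref{rs-step} yields \eqref{rs lemma}.
\end{proof}
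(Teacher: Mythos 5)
Your proposal is correct and follows essentially the same route as the paper: quote Gabcke's explicit Riemann--Siegel formula, keep the exponential sum intact by rewriting the cosine sum and applying the triangle inequality, bound the leading correction term by a constant times $(2\pi/t)^{1/4}$, and invoke Gabcke's $0.127\,t^{-3/4}$ bound on the remaining error for $t\ge 200$. One small numerical quibble: your claimed $\max_p|C_0(p)|\le 0.921$ is slightly too small, since Gabcke's maximum is exactly $\cos(\pi/8)\approx 0.9239$; this does not affect the conclusion, as $\cos(\pi/8)\,(2\pi)^{1/4}<1.48$ still holds.
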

\begin{proof}
The lemma gives a small improvement on \cite[Lemma 3]{platt-trudgian}.
The improvement is that the inequality for $\mathcal{R}(t)$ is
tighter if $t\ge 200$. The lemma is stated separately for emphasis, 
as it is an essential first step in all that follows.

We apply the triangle inequality to 
the Riemann--Siegel formula in \cite[page 9]{gabcke-thesis} to obtain
\begin{equation} \label{eq:rsform}
|\zeta(1/2+it)| \le 2 \left|\sum_{n=1}^{n_1} 
\frac{\cos(t\log n-\theta(t))}{\sqrt{n}}\right| 
+ \beta \left(\frac{2\pi}{t}\right)^{1/4} 
+ |R_0(t)|,
\end{equation}
where $n_1=\lfloor \sqrt{t/2\pi}\rfloor$, $|R_0(t)| < 0.127 t^{-3/4}$ for $t \ge 200$, and
\begin{equation}
\beta = \max_{0\le z\le 1} 
\left|\frac{\cos \frac{\pi}{2}(z^2+3/4)}{\cos \pi z}\right|.
\end{equation}
By \cite[page 65]{gabcke-thesis}, we have $\beta < 0.93$ (more precisely,
$\beta = \cos (\pi/8)$). 
The lemma now follows from the inequality 
$\beta (2\pi)^{1/4}< 1.48$, and using
\begin{equation}
\begin{split}
\left|\sum_{n=1}^{n_1} 
 \frac{\cos(t\log n-\theta(t))}{\sqrt{n}}\right| 
 &\le  \frac{1}{2}\left|\sum_{n=1}^{n_1} 
  \frac{e^{it\log n-i\theta(t)}}{\sqrt{n}}\right| + 
\frac{1}{2} \left|\sum_{n=1}^{n_1} \frac{e^{i\theta(t)-it\log n}}{\sqrt{n}}\right|\\
&= \left|\sum_{n=1}^{n_1} n^{-1/2+it}\right|. 
\end{split}
\end{equation}
\end{proof}

In choosing the cutoff point 
for the intermediate range, we computed an approximation, 
call it $t_1$, for the largest $t \ge 200$ such that
the Riemann--Siegel--Lehman bound \eqref{rs bound}  
beats the van der Corput bound \eqref{vdc bound}.
This gave $t_1\approx t_0$.
To decide the leading constant in Theorem~\ref{zeta bound}, we 
computed an approximation, call it $c_1$, for the smallest
$c > 0$ such that the
Riemann--Siegel--Lehman bound evaluated 
at $t_1$ is smaller than $c t_1^{1/6}\log t_1$. This
gave $c_1\approx c_0$. 

The computational bound in Lemma~\ref{computational bound lemma} was verified
using interval arithmetic, which is also the method used 
in \cite{platt-trudgian}.

\begin{lemma}[Computational bound] \label{computational bound lemma}
If $0\le t\le 3$, then $|\zeta(1/2+it)|\le 1.461$. If $3\le t \le 200$, then
$|\zeta(1/2+it)| \le c_0\, t^{1/6}\log t$. 
\end{lemma}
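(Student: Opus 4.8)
The plan is to verify Lemma~\ref{computational bound lemma} by direct, rigorous numerical evaluation of $\zeta(1/2+it)$ on the two finite intervals, using interval (ball) arithmetic so that every floating-point operation carries a certified error bound. For a given $t$, one first computes $\zeta(1/2+it)$ together with a guaranteed enclosure; the Riemann--Siegel formula (as in Lemma~\ref{riemann siegel lemma}, or more precisely with an explicit Gabcke-type remainder) gives a numerically tractable expression with a rigorously bounded truncation error, and adding the arithmetic round-off error yields an interval $[a,b]\ni |\zeta(1/2+it)|$. For the first assertion I would show $b \le 1.461$ at every sample point in $[0,3]$; for the second, I would show $b \le c_0\, t^{1/6}\log t$ at every sample point in $[3,200]$, where $c_0 = 0.63$.

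To turn pointwise evaluation into a statement valid for \emph{all} $t$ in the interval, the standard device is a Lipschitz (or derivative) bound: one covers $[0,3]$ and $[3,200]$ by finitely many small subintervals $[t_j, t_{j+1}]$, evaluates $|\zeta(1/2+it)|$ rigorously at the endpoints (or midpoints) with an interval enclosing the full subinterval, and uses a crude bound on $|\zeta'(1/2+it)|$ on that subinterval to certify that the maximum over $[t_j,t_{j+1}]$ does not exceed the target. An even cleaner route, and the one I expect the author takes, is to feed the \emph{interval} $t \in [t_j, t_{j+1}]$ directly into an interval-arithmetic implementation of the Riemann--Siegel formula: the output is then an interval that provably contains $\{|\zeta(1/2+it)| : t_j \le t \le t_{j+1}\}$, and one simply checks the numerical inequality against $1.461$ (resp.\ against $c_0\, t^{1/6}\log t$ evaluated on the same $t$-interval, noting $t \mapsto t^{1/6}\log t$ is increasing so its infimum is at $t_j$). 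Since the right-hand side $c_0 t^{1/6}\log t$ is comfortably larger than $|\zeta(1/2+it)|$ throughout $[3,200]$ — at $t=3$ it equals $0.63\cdot 3^{1/6}\log 3 \approx 0.82$, already above $|\zeta(1/2+3i)|$ — the mesh need not be very fine, and for the range $[0,3]$ the constant $1.461$ is chosen to exceed $|\zeta(1/2)| = 1.4603\ldots$, which is the maximum there.

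The main obstacle is purely one of rigor rather than difficulty: one must ensure that the Riemann--Siegel (or Euler--Maclaurin, for small $t$) computation is implemented with a genuinely certified error term valid down to $t$ near $0$ and up to $t=200$, and that the subinterval mesh is fine enough that the monotone envelope argument closes with the stated constants. In practice this is handled by a package such as \texttt{arb}, exactly as in \cite{platt-trudgian}; the only subtlety worth flagging is the transition near $t=0$, where $\zeta(1/2+it)$ is close to $\zeta(1/2)$ and one should use a convergent Euler--Maclaurin expansion rather than Riemann--Siegel, and the point $t=3$, where the two claimed bounds must be mutually consistent (indeed $1.461 < 0.82$ fails, so the switch of bound form at $t=3$ is essential and one checks $|\zeta(1/2+3i)| \le \min\{1.461,\, c_0\,3^{1/6}\log 3\}$ is \emph{not} required — only the piece relevant to each subrange). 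Having verified the finitely many interval inequalities, the lemma follows.
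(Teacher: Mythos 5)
Your proposal matches the paper's proof in essence: both verify the lemma by feeding small $t$-subintervals into an interval-arithmetic evaluation of zeta and checking the resulting enclosure against $1.461$ (on a finer mesh for $[0,3]$) or against $c_0\,t^{1/6}\log t$. The only notable difference is that the paper uses the Euler--Maclaurin formula throughout $[0,200]$ (with a longer main sum for $t\le 3$) rather than Riemann--Siegel, which is the safer choice since the certified Riemann--Siegel remainder of Lemma~\ref{riemann siegel lemma} is only stated for $t\ge 200$ --- a point you correctly flag as the main rigor issue.
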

\begin{proof}
We implemented 
the Euler-Maclaurin formula (see \cite{odlyzko-schonhage-algorithm,rubinstein-computational-methods})
using interval arithmetic. 
We remark that a substantial loss is incurred in 
the upper bound (and computational speed) due to the 
use of interval arithmetic, but this loss is still tolerable for our purposes.

We used a main sum of length $\lceil 2 t\rceil$ terms 
in the Euler-Maclaurin formula with a single correction term. 
Given an interval $[a_0,b_0]$,
our program 
computed an enveloping interval 
$I_{out}=[a_1,b_1]$ such that 
$\{|\zeta(1/2+it)|: t\in [a,b]\} \subset I_{out}$. Hence, 
\begin{equation}\label{interval bound}
\max_{t\in [a_0,b_0]}|\zeta(1/2+it)| \le \max_{t\in I_{out}} |t|=b_1.
\end{equation}
With this in mind,
we partitioned the interval $[3,200]$ into consecutive subintervals 
$I_q:=[q/Q,(q+1)/Q]$, where $q=3Q,\ldots,200Q-1$ and $Q=2^7$.
For each $I_q$, our program returned an enveloping interval $I_{q,out}$ 
which we used in 
 \eqref{interval bound} to verify that
\begin{equation}\label{check est}
\frac{\max_{t\in I_q}|\zeta(1/2+it)|}{(q/Q)^{1/6}\log(q/Q)}\le c_0.
\end{equation}
The estimate \eqref{check est} held for all relevant $q$, in fact, with a
smaller constant of $0.595$.

To verify the bound $|\zeta(1/2+it)|\le 1.461$ 
for $t \in [0,3]$, one could prove that $|\zeta(1/2+it)|$, $0\le t\le 3$,
attains its maximum at $t=0$. 
However, this seemed unduly complicated. Instead,
we fell back on
our interval arithmetic program after modifying some its 
parameters. Specifically, we used the Euler-Maclaurin formula 
with a main sum of $\lceil 60(t+1)\rceil$ terms, which is much longer than
before. 
We kept a single correction term, and 
also used a finer partition with $Q=2^{14}$.
The longer main sum 
for  $0\le t\le 3$ ensured that the error in the the Euler-Maclaurin 
formula was sufficiently small. Given this, we 
were able to verify the claimed bound.
\end{proof}

\begin{lemma}[Riemann--Siegel--Lehman bound] \label{trivial bound lemma}
If $t \ge 200$, then
\begin{equation}\label{rs bound}
|\zeta(1/2+it)| \le \frac{4t^{1/4}}{(2\pi)^{1/4}} - 2.08.
\end{equation}
In particular, for $200\le t\le t_0$, we have
$|\zeta(1/2+it)| \le c_0\, t^{1/6}\log t$.
\end{lemma}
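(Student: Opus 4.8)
The plan is to feed the Dirichlet sum of Lemma~\ref{riemann siegel lemma} into the crudest possible estimate --- the triangle inequality $\bigl|\sum_{n=1}^{n_1}n^{-1/2+it}\bigr|\le\sum_{n=1}^{n_1}n^{-1/2}$ --- and then track the constants carefully. With $n_1=\lfloor\sqrt{t/(2\pi)}\rfloor$, Lemma~\ref{riemann siegel lemma} already gives, for $t\ge 200$,
\[
|\zeta(1/2+it)|\le 2\sum_{n=1}^{n_1}n^{-1/2}+\mathcal{R}(t).
\]
The ``Lehman'' ingredient is the elementary fact that $a_N:=\sum_{n=1}^{N}n^{-1/2}-2\sqrt{N}$ is strictly decreasing, because $a_N-a_{N-1}=N^{-1/2}-2(\sqrt N-\sqrt{N-1})=N^{-1/2}-\frac{2}{\sqrt N+\sqrt{N-1}}<0$. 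Hence $\sum_{n=1}^{n_1}n^{-1/2}\le 2\sqrt{n_1}+a_{n_1}\le 2\sqrt{n_1}+a_5$ as soon as $n_1\ge 5$, and one computes $a_5=1+\frac1{\sqrt2}+\frac1{\sqrt3}+\frac12+\frac1{\sqrt5}-2\sqrt5=-1.2404\ldots\,$. For $t\ge 200$ we do have $n_1\ge\lfloor\sqrt{200/(2\pi)}\rfloor=5$, and also $\sqrt{n_1}\le(t/(2\pi))^{1/4}$, so $2\sum_{n=1}^{n_1}n^{-1/2}\le\frac{4t^{1/4}}{(2\pi)^{1/4}}+2a_5$.

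It then remains to absorb $\mathcal{R}(t)=1.48\,t^{-1/4}+0.127\,t^{-3/4}$, which is decreasing, so $\mathcal{R}(t)\le\mathcal{R}(200)<0.3960$ for $t\ge 200$. Combining,
\[
|\zeta(1/2+it)|\le\frac{4t^{1/4}}{(2\pi)^{1/4}}+2a_5+\mathcal{R}(200),
\]
and since $2a_5+\mathcal{R}(200)=-2.4809\ldots+0.3959\ldots<-2.08$, the first assertion follows. I expect this last numerical inequality to be the crux: the margin is only about $0.005$, and it is smallest precisely at the left endpoint $t=200$, where $n_1$ is as small as $5$ and $\mathcal{R}(t)$ is largest. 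One genuinely needs the exact value of $a_5$ together with the monotonicity of $a_N$; the cheaper integral-test bound $\sum_{n\le N}n^{-1/2}<2\sqrt N-1$, and even the two-term Euler--Maclaurin bound $\sum_{n\le N}n^{-1/2}\le 2\sqrt N+\zeta(1/2)+\frac1{2\sqrt N}$, both fall just short.

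For the final ``in particular'' clause it suffices, given the first assertion, to prove that $F(t):=c_0\,t^{1/6}\log t-\bigl(\frac{4t^{1/4}}{(2\pi)^{1/4}}-2.08\bigr)\ge 0$ on $[200,t_0]$. The plan here is a unimodality argument reducing this to the two endpoints. Setting $\psi(t):=c_0\bigl(\frac16\log t+1\bigr)-(2\pi)^{-1/4}t^{1/12}$, one checks that $F'(t)$ has the same sign as $\psi(t)$, and that
\[
\psi'(t)=t^{-1}\Bigl(\frac{c_0}{6}-\frac{1}{12}(2\pi)^{-1/4}t^{1/12}\Bigr)
\]
changes sign exactly once --- from $+$ to $-$ --- at $t^{\ast}=(2c_0(2\pi)^{1/4})^{12}\approx 3.97\times 10^{3}$. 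Hence $\psi$ increases on $[200,t^{\ast}]$ and decreases on $[t^{\ast},t_0]$; since $\psi(200)>0$ and $\psi(t_0)<0$, the function $\psi$ has exactly one zero on $[200,t_0]$, lying in the decreasing branch. Therefore $F$ is increasing and then decreasing on $[200,t_0]$, so $\min_{[200,t_0]}F=\min\bigl(F(200),F(t_0)\bigr)$, and it is enough to verify $F(200)>0$ and $F(t_0)>0$. The first is comfortable ($F(200)=0.65\ldots$); the second, $F(t_0)=0.01\ldots$, is again razor-thin --- in effect it is the property that defines $t_0=9.3\times 10^{7}$ as the crossover point between the Riemann--Siegel--Lehman bound and $c_0\,t^{1/6}\log t$ --- so this final check should be carried out with guaranteed precision, e.g.\ in interval arithmetic.
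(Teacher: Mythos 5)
Your proof is correct and, for the main inequality, is in substance identical to the paper's: the paper bounds $\sum_{n=1}^{n_1}n^{-1/2}$ by keeping the first five terms exactly and applying the integral test from $n=5$ onward, which yields precisely your $2\sqrt{n_1}+a_5$ with $2a_5=2\sum_{n=1}^{5}n^{-1/2}-4\sqrt{5}<-2.48$, and then adds $\mathcal{R}(200)<0.4$; your monotonicity argument for $a_N$ is the same estimate in different clothing, and you are right that the margin at $t=200$ is only a few thousandths, so the exact value of $a_5$ is genuinely needed. The only point of divergence is the ``in particular'' clause: the paper simply asks \texttt{Mathematica} to certify that $4t^{1/4}/(2\pi)^{1/4}-2.08=c_0t^{1/6}\log t$ has no root in $[200,t_0]$ and checks the sign at $t=200$, whereas you prove unimodality of the difference $F$ via the sign analysis of $\psi$ and $\psi'$, reducing everything to the two endpoint evaluations $F(200)>0$ and $F(t_0)>0$. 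Your version is more self-contained and makes transparent \emph{why} the dangerous point is $t=t_0$ (it is by construction the crossover with the van der Corput bound, so $F(t_0)$ is necessarily tiny); as you note, that last evaluation should be done in interval arithmetic, but the argument is sound.
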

\begin{proof}
This lemma gives a small improvement on the Lehman bound in \cite[Lemma
2]{lehman}. The improvement is in the term $-2.08$, which is significant if $t$
is not too large.

We start with the bound furnished by the Riemann--Siegel Lemma~\ref{riemann siegel lemma}. 
To bound the main sum there, we employ the estimate
\begin{equation}\label{integration step}
\begin{split}
2\Big|\sum_{n=1}^{n_1} \frac{e^{it\log n}}{\sqrt{n}}\Big| &\le 
2\sum_{n=1}^5 \frac{1}{\sqrt{n}}+ 2\int_5^{n_1}
\frac{1}{\sqrt{x}}\,dx\\
&= 2\sum_{n=1}^5 \frac{1}{\sqrt{n}}+ 4\sqrt{n_1}-4\sqrt{5}.
\end{split}
\end{equation}
Note that $n_1=\lfloor \sqrt{t/(2\pi)}\rfloor \ge 5$ for $t\ge 200$, so 
the integration step in \eqref{integration step} makes sense.
Also, we have
$2\sum_{n=1}^5 n^{-1/2} - 4\sqrt{5} < -2.48$.
To bound the remainder term $\mathcal{R}(t)$ in the Riemann--Siegel Lemma~\ref{riemann siegel
lemma}, we employ the estimate 
\begin{equation}\label{rem step}
\mathcal{R}(t)\le 1.48(200)^{-1/4} + 0.127(200)^{-3/4} < 0.4,\qquad (t\ge 200).
\end{equation}
The first part of the lemma now follows on 
substituting \eqref{integration step} and \eqref{rem step} back 
into \eqref{rs lemma}, and noting that $\sqrt{n_1}\le
\big(t/2\pi\big)^{1/4}$. 

To prove the second part of the lemma, we use \verb!Mathematica! to 
verify that
there is no solution to the equation
\begin{equation}
\frac{4t^{1/4}}{(2\pi)^{1/4}} - 2.08 = c_0\, t^{1/6}\log t,\qquad 
(200\le t\le t_0),
\end{equation}
and that the l.h.s of the equation is smaller than
the r.h.s. at $t=200$,
and therefore throughout the range $t\in [200,t_0]$.
\end{proof}

\begin{lemma}[Van der Corput bound] \label{corput bound lemma}
If $t \ge t_0$, then 
\begin{equation}\label{vdc bound}
|\zeta(1/2+it)| \le 
a_1 t^{1/6}\log t + a_2 t^{1/6}+ a_3
\end{equation}
where
$a_1:=0.6058490462530$, 
$a_2:=0.5743984045897$, 
and $a_3:= -2.884626766806$.
In particular, for $t\ge t_0$, we have $|\zeta(1/2+it)|\le c_0 t^{1/6}\log t$.
\end{lemma}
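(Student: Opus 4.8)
The plan is to start from the Riemann--Siegel Lemma~\ref{riemann siegel lemma}, which gives $|\zeta(1/2+it)| \le 2|\sum_{n=1}^{n_1} n^{-1/2+it}| + \mathcal{R}(t)$ with $n_1 = \lfloor \sqrt{t/(2\pi)}\rfloor$ and $\mathcal{R}(t) = 1.48 t^{-1/4} + 0.127 t^{-3/4}$. Since the remainder $\mathcal{R}(t)$ is already tiny (and decreasing), the whole game is to bound the main sum $S := |\sum_{n\le n_1} n^{-1/2+it}|$ in a way that produces the stated constants $a_1, a_2, a_3$. I would write $n^{-1/2+it} = n^{-1/2} e^{2\pi i f(n)}$ with $f(x) = (t/2\pi)\log x$, so that $f'''(x) = (t/\pi) x^{-3}$. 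The strategy is dyadic-type: split the range $[1, n_1]$ into $O(\log t)$ blocks on which $n^{-1/2}$ and the size of $f'''$ are roughly constant, apply partial summation to pull out the $n^{-1/2}$ weight, and apply Lemma~\ref{corput lemma} to each resulting pure exponential sum.

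The key steps, in order: (1) Peel off a short initial segment $n \le n^*$ (with $n^*$ of size roughly $t^{1/3}$ or a fixed small number) and bound it trivially by $\sum_{n\le n^*} n^{-1/2} \le 2\sqrt{n^*}$ — this is cheap and avoids the regime where $W = \pi/t \cdot x^3$ is too small for the lemma's hypothesis $W > 1$ (or $W > 2/\pi$). (2) Cover the remaining range by blocks $[N+1, N+L]$ of length $L \approx t^{1/3}$ (the paper's advertised new subdivision), chosen so that on each block $f'''$ varies by at most a bounded factor $\lambda$ — because $f'''(x) = (t/\pi)x^{-3}$ and $x$ ranges over an interval of relative length $L/N$, taking $L \le c N t^{-1/3}$ or so keeps $\lambda$ close to $1$; one picks the blocks to tile $[n^*, n_1]$ and there are $\approx t^{1/6}$ of them but they fall into $O(\log t)$ dyadic scales. (3) On each block apply partial summation to replace $\sum n^{-1/2} e^{2\pi i f(n)}$ by (roughly) $N^{-1/2}$ times $\max$ of partial exponential sums $|\sum e^{2\pi i f(n)}|$, then invoke Lemma~\ref{corput lemma} with $W = \pi N^3/t$, a fixed $\eta \approx 1$, and the block's $\lambda$, getting $|\sum e^{2\pi i f(n)}| \le (L W^{-1/3} + \eta)^{1/2}(\alpha L + \beta W^{2/3})^{1/2}$. (4) Substitute $W^{1/3} \asymp N t^{-1/3}$, $L \asymp t^{1/3}$ to see each block contributes $\ll N^{-1/2} \cdot (\text{const}) \cdot t^{1/6}$ (the two terms $LW^{-1/3}$ and $\eta$ are comparable, and $\alpha L$ dominates $\beta W^{2/3}$ by the choice $L\asymp t^{1/3}$, $W^{2/3}\asymp N^2 t^{-2/3}$ which is smaller once $N \ll t^{1/2}$), and then sum over the $O(\log t)$ scales, with the number of blocks per scale times $N^{-1/2}$ telescoping to give the $\log t$.

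The main obstacle — and the part requiring genuine care rather than routine estimation — is the \emph{explicit constant bookkeeping} in step (4): one must choose $n^*$, the block-length constant, $\eta$, and the dyadic partition of scales so that all the accumulated constants, when summed over scales and combined with the factor-of-$2$ from Lemma~\ref{riemann siegel lemma} and with $\mathcal{R}(t)$, land below $a_1 t^{1/6}\log t + a_2 t^{1/6} + a_3$ with exactly the stated numerical values, valid for all $t \ge t_0 = 9.3\times 10^7$. In particular the lower-order terms (the $\eta$ part of $LW^{-1/3}+\eta$, the $\beta W^{2/3}$ part, the boundary blocks where $\lambda$ or $L$ cannot be taken ideal, and the trivial initial segment) must be tracked honestly, since they are what generate $a_2$ and the negative constant $a_3$; getting $a_3 < 0$ reflects that the trivial-segment and Euler--Maclaurin-style savings outweigh the crude losses. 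I would organize this by first proving a clean asymptotic version with unspecified constants, then re-running the same argument keeping every inequality sharp and using $t \ge t_0$ to absorb error terms of lower order in $t$, and finally verifying the resulting numerical inequality (as in Lemma~\ref{trivial bound lemma}) with a computer algebra check. The final sentence of the lemma, that $|\zeta(1/2+it)| \le c_0 t^{1/6}\log t$ for $t \ge t_0$, then follows because $a_1 < c_0 = 0.63$ and the correction $a_2 t^{1/6} + a_3 < (c_0 - a_1) t^{1/6}\log t$ for $t \ge t_0$ — a one-line monotonicity check since $a_2/(c_0 - a_1) \cdot 1/\log t_0$ is well below $1$ and $a_3 < 0$ only helps.
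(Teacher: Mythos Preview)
Your proposal is correct and follows essentially the same approach as the paper: start from Lemma~\ref{riemann siegel lemma}, split the main sum into blocks of length $K=\lceil t^{1/3}\rceil$, bound an initial segment trivially, apply partial summation together with Lemma~\ref{corput lemma} to each remaining block, then sum and track constants. The paper's execution differs only in bookkeeping: it indexes the blocks by $r$ (so $N=rK$) and bounds $\sum_{r=r_0}^{R} 1/\sqrt{r(r+1)} \le \log(R/(r_0-1))$ directly rather than grouping into dyadic scales, and it fixes the specific choices $r_0=5$ and $\eta=(75/64)^{2/3}$ before a numerical evaluation in \texttt{Mathematica} to produce $a_1,a_2,a_3$.
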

\begin{proof}
We plan to divide the main sum in the Riemann--Siegel
Lemma~\ref{riemann siegel lemma} into pieces of length $\approx t^{1/3}$,
then apply the van der Corput 
Lemma~\ref{corput lemma} to each piece. 
To this end, let 
$K =\lceil t^{1/3}\rceil$
and $R = \lfloor n_1/K\rfloor$, 
where, as before, $n_1=\lfloor \sqrt{t/2\pi}\rfloor$.
Here, $K$ is the length of each piece (except possibly the last one, which 
can be shorter), and $R+1$ is
the total number of pieces.

The remainder term 
$\mathcal{R}(t):=1.48 t^{-1/4}  + 0.127 t^{-3/4}$ 
in the Riemann--Siegel Lemma~\ref{riemann siegel lemma} 
satisfies
$\mathcal{R}\le \mathcal{R}(t_0)$ for $t\ge t_0$.
Thus, carrying out the aforementioned subdivision, 
and using the triangle inequality, we obtain
\begin{equation}\label{main sum division}
\begin{split}
|\zeta(1/2+it)| \le&  2\sum_{n=1}^{r_0 K-1} \frac{1}{\sqrt{n}}+2\sum_{r=r_0}^{R-1}
\Big|\sum_{n=rK}^{(r+1)K-1} \frac{e^{it\log n}}{\sqrt{n}} \Big|\\ 
&+2\Big|\sum_{RK}^{n_1} \frac{e^{it\log n}}{\sqrt{n}}\Big|+\mathcal{R}(t_0).
\end{split}
\end{equation}
Here, we trivially estimated the part of the 
main sum with $n<r_0K$,
where $r_0$ is a positive integer to be chosen later.
Also, we used $R>0$, which is due to $t\ge t_0$.

To bound the first sum in \eqref{main sum division},
we note that 
$r_0K\ge \lceil r_0 t_0^{1/3}\rceil$.
So, proceeding as in \eqref{integration step}, we obtain 
\begin{equation}
 2\sum_{n=1}^{r_0 K-1}  \frac{1}{\sqrt{n}} 
 \le 
 4\sqrt{r_0K}+ \mathcal{I}(r_0,t_0),
\end{equation}
where
\begin{equation}
\mathcal{I}(r_0,t_0) :=
2\sum_{n=1}^{\lceil r_0 t_0^{1/3}\rceil-1} \frac{1}{\sqrt{n}}
 -4\sqrt{\lceil r_0 t_0^{1/3}\rceil-1}.
\end{equation}
To bound the remaining sums in \eqref{main sum division}, 
we use partial summation~\cite[(5.2.1)]{titchmarsh}. 
Put together, if we let
\begin{equation}
S:=2\sum_{r=r_0}^R \frac{1}{\sqrt{rK}} \max_{\Delta \le K}
\Big|\sum_{k =0}^{\Delta-1} e^{it\log (rK+k)}\Big|, 
\end{equation}
then we obtain
\begin{equation}\label{zeta bound 0}
|\zeta(1/2+it)| \le S+ 4\sqrt{r_0K} +
\mathcal{I}(r_0,t_0)+\mathcal{R}(t_0).
\end{equation}

In order to bound the inner sum of $S$, 
we employ the van der Corput Lemma~\ref{corput lemma}.
Setting
\begin{equation}
f(x) := \frac{t}{2\pi}\log(rK+x), \qquad (0\le x \le \Delta-1),
\end{equation}
then
\begin{equation}
f'''(x) = \frac{t}{\pi (rK + x)^3}, \qquad (0\le x\le \Delta -1).
\end{equation}
So, on defining
\begin{equation}\label{W lambda def}
W :=W_r=\frac{\pi (r+1)^3K^3}{t},\qquad \lambda:=\lambda_r =\frac{(r+1)^3}{r^3},
\end{equation}
and noting that $\Delta \le K$, we obtain
\begin{equation}
\frac{1}{W} \le |f'''(y)|\le \frac{\lambda}{W},\qquad (0\le y\le K).
\end{equation}
We apply Lemma~\ref{corput lemma} 
with $L=K$, $\alpha_r:=\alpha(W_r,\lambda_r,\eta)$,
$\beta_r:=\beta(W_r,\eta)$, and with $\eta >0$ to be chosen later. 
 This yields
\begin{equation}
S \le  2\sum_{r=r_0}^{R} \frac{1}{\sqrt{rK}}
\sqrt{\alpha_r K^2 W_r^{-1/3} + \eta \alpha_r K + \beta_r K W_r^{1/3}
+\eta\beta_rW_r^{2/3}}.
\end{equation}
We factor out $K^2W_r^{-1/3} = \frac{K t^{1/3}}{\pi^{1/3}(r+1)}$ from under the
square-root. This gives
\begin{equation}\label{S bound CalB}
S\le  \frac{2 t^{1/6}}{\pi^{1/6}}\sum_{r=r_0}^{R} \sqrt{\frac{\mathcal{B}_r}{r(r+1)}},
\end{equation}
where
\begin{equation}
\mathcal{B}_r :=\alpha_r+\frac{\eta \alpha_r W_r^{1/3}}{K} + \frac{\beta_r W_r^{2/3}}{K}+\frac{\eta \beta_r W_r}{K^2}.
\end{equation}
Since $K< t^{1/3}+1$ and $R \le t^{1/6}/\sqrt{2\pi}$, 
we have
\begin{equation}
W_r \le \pi (R+1)^3 (1+t^{-1/3})^3\le \frac{\sqrt{t}\rho^3}{2^{3/2}\sqrt{\pi}},\qquad  
\rho := \Big(1+\frac{1}{R}\Big)\Big(1+\frac{1}{t^{1/3}}\Big).
\end{equation}
In addition, $K\ge t^{1/3}$, so put together we obtain
\begin{equation}\label{CalB}
\mathcal{B}_r \le \alpha_r+\frac{\eta\alpha_r \rho}{\sqrt{2}\pi^{1/6} t^{1/6}} +
\frac{\beta_r\rho^2}{2\pi^{1/3}} + \frac{\eta \beta_r\rho^3}{2^{3/2}\sqrt{\pi}t^{1/6}}.
\end{equation}
At this point, we make several observations.
First, $W_r$ is monotonically increasing in $r$, and $\lambda_r$
is monotonically decreasing in $r$.
Hence, by the definitions of $\alpha_r$ and $\beta_r$, we see that
they monotonically decrease with $r$.
So $\alpha_r \le \alpha_{r_0}$ and 
$\beta_r\le \beta_{r_0}$ for $r\ge r_0$. 
Second, we fix $\eta = \eta_0:= (75/64)^{2/3}$, which is to help balance the leading two terms in the formula for $\alpha_r$ in
\eqref{alpha beta formula}.
Third, if we define 
\begin{equation}
R_0 := \left\lceil \frac{\sqrt{t_0/(2\pi)}-1}{t_0^{1/3}+1}-1\right\rceil, 
\end{equation}
then $R\ge R_0$, and therefore 
\begin{equation}\label{rho max}
\rho \le \rho_0 := \Big(1+\frac{1}{R_0}\Big)\Big(1+\frac{1}{t_0^{1/3}}\Big). 
\end{equation}
Assembling these estimates into \eqref{S bound CalB}, and using
\begin{equation}
\sum_{r=r_0}^R\frac{1}{\sqrt{r(r+1)}}\le \log \frac{R}{r_0-1},
\end{equation}
together with the inequality $t\ge t_0$ to bound the denominators
in \eqref{CalB} from below, 
we obtain 
\begin{equation}\label{S estimates fin}
S \le \frac{2t^{1/6}}{\pi^{1/6}}\sqrt{\alpha_{r_0}
+ \frac{\eta_0 \alpha_{r_0} \rho_0}{\sqrt{2}\pi^{1/6} t_0^{1/6}}
+\frac{\beta_{r_0}\rho_0^2}{2\pi^{1/3}}
+  \frac{\eta_0 \beta_{r_0} \rho_0^3}{2^{3/2} \sqrt{\pi}
t_0^{1/6}}} \log \frac{R}{r_0-1}.
\end{equation}
We substitute \eqref{S estimates fin} back 
into \eqref{zeta
bound 0}, use the inequality $W_{r_0}\ge \pi(r_0+1)^3$
to simplify the bounds for $\alpha_{r_0}$ and $\beta_{r_0}$,
and choose $r_0=5$ which is suggested by 
numerical experimentation.
Then, we combine the resulting expression with the bounds 
\begin{equation}
4\sqrt{r_0K}\le 4\sqrt{r_0(1+t_0^{-1/3})}t^{1/6},\qquad
R \le t^{1/6}/\sqrt{2\pi},
\end{equation}
and numerically evaluate the resulting constants
using \verb!Mathematica!. 
On completion, this yields the first part of the lemma.

To prove the second part of the lemma, denote the r.h.s.\ of \eqref{vdc bound} by
$(*)$, and consider the equation $(*) = c_0\, t^{1/6}\log t$. 
Using \verb!Mathematica!, 
we find that there is no solution $t\ge
t_0$ to this equation, 
and that $(*)$ is smaller than the $c_0\, t^{1/6}\log t$ at $t=t_0$, 
and therefore for all $t\ge t_0$.
\end{proof}

\begin{remark}
The reason we restrict $t \ge t_0$ in Lemma~\ref{corput bound lemma}
is that the Riemann--Siegel--Lehman  bound from Lemma~\ref{trivial bound lemma}
is tighter for $t < t_0$. 
So we may as well take $t\ge t_0$, which 
 gives marginally better constants.
\end{remark}

\section{Proof of Lemma~\ref{corput lemma}}\label{lemma proof}

\begin{proof}
We use the Weyl-van der Corput 
Lemma in \cite[Lemma 5]{cheng-graham}, but in the more precise form presented at the bottom of
page 1273. 
Also, we incorporate a refinement pointed out by Platt and Trudgian 
in \cite[Lemma 2]{platt-trudgian}
that allows writing the leading term in \eqref{A process} 
as $L+M-1$ instead of $L+M$.
Put together, if $M$ is a positive integer, then
\begin{equation}\label{A process}
\begin{split}
\Big|\sum_{n=N+1}^{N+L} e^{2\pi i f(n)} \Big|^2 \le &  (L+M-1)\Big( 
\frac{L}{M}+\frac{2}{M}\sum_{m=1}^M \left(1-\frac{m}{M}\right)|S_m'(L)|\Big),
\end{split}
\end{equation}
where
\begin{equation}
S_m'(L) := \sum_{r = N+1}^{N+L-m} e^{2\pi i (f(r+m) - f(r))}.
\end{equation}
Henceforth, we may assume that $m< L$
and $L> 1$. Otherwise, the sum
$S_m'$ is empty, 
and so it does not contribute to the upper bound in \eqref{A process}.

Let $g(x) := f(x+m) - f(x)$,  where $N+1\le x\le  N+L-m$.
Then, $g''(x) = f''(x+m)-f''(x)$. 
Therefore, by the mean-value theorem, 
  $g''(x) = m f'''(\xi^*)$
for some $\xi^* \in (x, x + m)$.
Since we assumed that $1\le m<L$ and $L>1$ then
$(x,x+m)\subset [N+1,N+L]$.
So, given our bound on $f'''$, we deduce that
\begin{equation}
\frac{m}{W} \le |g''(x)| \le \frac{m\lambda}{W},\qquad 
(N+1\le x\le N+L-m).
\end{equation}
Applying the van der Corput Lemma in  
\cite[Lemma 3]{cheng-graham} to $S_m'(L)$
 thus yields  
\begin{equation}\label{corput second der}
|S_m'(L)| \le 
\frac{8\lambda L\sqrt{m/W}}{5}
+ \frac{3 \lambda L m }{W}+ \frac{8\sqrt{W/m}}{5}
+3.
\end{equation}
We substitute \eqref{corput second der} into \eqref{A process}, then execute
the summation over $m$ using the estimates  
that appear after \cite[Lemma 7]{cheng-graham}; namely, 
\begin{equation}\label{trapz rule}
\sum_{m=1}^M \left(1-\frac{m}{M}\right)\sqrt{m} \le \frac{4M^{3/2}}{15},
\qquad
\sum_{m=1}^M \left(1-\frac{m}{M}\right)\frac{1}{\sqrt{m}} \le
\frac{4\sqrt{M}}{3},
\end{equation}
as well as the Euler-Maclaurin summation estimates 
\begin{equation}
\sum_{m=1}^M \left(1-\frac{m}{M}\right)
<\frac{M}{2},\qquad  
\sum_{m=1}^M \left(1-\frac{m}{M}\right)m < \frac{M^2}{6}.
\end{equation}
From this, we conclude that
\begin{equation}\label{corput S' estimate}
\sum_{m=1}^M \left(1-\frac{m}{M}\right)|S_m'(L)| \le 
\frac{32\lambda L M^{3/2}}{75\sqrt{W}}
+ \frac{\lambda L M^2}{2W} 
+ \frac{32\sqrt{WM}}{15}  
+ \frac{3M}{2}.
\end{equation}
In particular, 
 substituting \eqref{corput S' estimate} back into \eqref{A process},
we obtain
\begin{equation*}
\Big|\sum_{n=N+1}^{N+L} e^{2\pi i f(n)} \Big|^2 \le  (L+M-1)\left( 
\frac{L}{M}+\frac{64\lambda L}{75}\sqrt{\frac{M}{W}}
+ \frac{\lambda L M}{W} 
+ \frac{64}{15}\sqrt{\frac{W}{M}}  
+ 3
\right).
\end{equation*}
We choose $M=\lceil \eta W^{1/3}\rceil$ 
for some free parameter $\eta >0$ that can be optimized 
(usually, $\eta$ will be around $1$).
This choice is in order to balance the first two 
terms on the r.h.s.\ as they typically dominate
in our application.
So, now, appealing to the inequality $\eta W^{1/3} \le
M\le \eta W^{1/3} + 1$, we deduce that
\begin{equation}
\begin{split}
\Big|\sum_{n=N+1}^{N+L} e^{2\pi i f(n)} \Big|^2 \le&
(L+\eta W^{1/3})\Big(\frac{L}{\eta W^{1/3}}+
\frac{64\lambda L}{75}\sqrt{\frac{\eta W^{1/3}+1}{W}}\\
&+\frac{\lambda L (\eta W^{1/3}+1)}{W}+
\frac{64}{15}\sqrt{\frac{W}{\eta W^{1/3}}}+
3\Big).
\end{split}
\end{equation}
We factor out $LW^{-1/3}$ from the first three terms in the second bracket,
and $W^{1/3}$ from the last two terms. This gives
\begin{equation*}\label{fin estimate}
\begin{split}
\Big|\sum_{n=N+1}^{N+L} e^{2\pi i f(n)} \Big|^2 
\le& (L+ \eta W^{1/3}) 
\frac{L}{W^{1/3}}\Big(\frac{1}{\eta}+\frac{64\lambda}{75} \sqrt{\eta +W^{-1/3}}\\
&+\frac{\lambda (\eta+W^{-1/3})}{W^{1/3}}\Big)
+  (L+ \eta W^{1/3})W^{1/3}\Big(\frac{64}{15\sqrt{\eta}}
+3W^{-1/3}\Big).
\end{split}
\end{equation*}
Last, the lemma follows on recalling the definitions of $\alpha$ and $\beta$ in
\eqref{alpha beta formula}.
\end{proof}

\bibliographystyle{amsplain}
\bibliography{explicitCorput}
\end{document}